\newtheorem{theorem}{Theorem}[section]
\newtheorem{claim}[theorem]{Claim}
\numberwithin{equation}{section}
\begin{document}
\title[On the constants in the Bohnenblust--Hille inequality]{The asymptotic growth of the constants in the Bohnenblust-Hille inequality is optimal}
\author[D. Diniz \and G. A. Mu\~{n}oz-Fern\'{a}ndez \and D. Pellegrino \and J. B. Seoane-Sep\'{u}lveda]{D. Diniz, G. A. Mu\~{n}oz-Fern\'{a}ndez
\textsuperscript{*} \and D. Pellegrino\textsuperscript{**} \and J. B. Seoane-Sep\'{u}lveda\textsuperscript{*}}
\address{Unidade Academica de Matem\'{a}tica e Estat\'{i}stica,\newline\indent Universidade Federal de Campina Grande, \newline\indent Caixa Postal 10044, \newline\indent Campina Grande, 58429-970, Brazil.}
\email{diogo@dme.ufcg.edu.br}
\address{Departamento de An\'{a}lisis Matem\'{a}tico,\newline\indent Facultad de Ciencias Matem\'{a}ticas, \newline\indent Plaza de Ciencias 3, \newline\indent Universidad Complutense de Madrid,\newline\indent Madrid, 28040, Spain.}
\email{gustavo$\_$fernandez@mat.ucm.es}
\address{Departamento de Matem\'{a}tica, \newline\indent Universidade Federal da Para\'{\i}ba, \newline\indent 58.051-900 - Jo\~{a}o Pessoa, Brazil.} \email{pellegrino@pq.cnpq.br}
\address{Departamento de An\'{a}lisis Matem\'{a}tico,\newline\indent Facultad de Ciencias Matem\'{a}ticas, \newline\indent Plaza de Ciencias 3, \newline\indent Universidad Complutense de Madrid,\newline\indent Madrid, 28040, Spain.}
\email{jseoane@mat.ucm.es}
\subjclass[2010]{46G25, 47L22, 47H60.}
\thanks{\textsuperscript{*}Supported by the Spanish Ministry of Science and Innovation, grant MTM2009-07848.}
\thanks{\textsuperscript{**}Supported by CNPq and PROCAD Novas Fronteiras CAPES}
\keywords{Absolutely summing operators, Bohnenblust--Hille Theorem.}
\begin{abstract}
We provide (for both the real and complex settings) a family of constants, $%
(C_{m})_{m\in \mathbb{N}}$, enjoying the Bohnenblust--Hille inequality and
such that $\displaystyle\lim_{m\rightarrow \infty }\frac{C_{m}}{C_{m-1}}=1$,
i.e., their asymptotic growth is the best possible. As a consequence, we
also show that the optimal constants, $(K_{m})_{m\in \mathbb{N}}$, in the
Bohnenblust--Hille inequality have the best possible asymptotic behavior.
Besides its intrinsic mathematical interest and potential applications to
different areas, the importance of this result
also lies in the fact that all previous estimates and related results for
the last 80 years (such as, for instance, the multilinear version of the
famous Grothendieck Theorem for absolutely summing operators) always present
constants $C_{m}$'s growing at an exponential rate of certain power of $m$.
\end{abstract}
\maketitle

\section{Preliminaries. The History of the Problem}

The Bohnenblust--Hille inequality (1931, \cite{bh}) asserts that for every
positive integer $m\geq2$ there exists a sequence of non decreasing positive
scalars $\left( C_{m}\right) _{m=1}^{\infty }$ such that
\begin{equation}
\left( \sum\limits_{i_{1},\ldots ,i_{m}=1}^{N}\left\vert
U(e_{i_{^{1}}},\ldots ,e_{i_{m}})\right\vert ^{\frac{2m}{m+1}}\right) ^{%
\frac{m+1}{2m}}\leq C_{m}\sup_{z_{1},...,z_{m}\in \mathbb{D}^{N}}\left\vert
U(z_{1},...,z_{m})\right\vert  \label{hypp}
\end{equation}%
for all $m$-linear mapping $U:\mathbb{C}^{N}\times \cdots \times \mathbb{C}%
^{N}\rightarrow \mathbb{C}$ and every positive integer $N$, where $\left(
e_{i}\right) _{i=1}^{N}$ denotes the canonical basis of $\mathbb{C}^{N}$ and
$\mathbb{D}^{N}$ represents the open unit polydisk in $\mathbb{C}^{N}$. The
original constants obtained by Bohnenblust and Hille are
\begin{equation*}
C_{m}=m^{\frac{m+1}{2m}}2^{\frac{m-1}{2}}.
\end{equation*}%
Making $m=2$ in (\ref{hypp}) we recover Littlewood's $4/3$ inequality (1930,
\cite{Litt}). In the last 80 years, very few improvements for the constants $%
C_{m}$ have been achieved:

\begin{itemize}
\item $C_{m}=2^{\frac{m-1}{2}}$ (Kaijser \cite{Ka} and Davie \cite{d}),

\item $C_{m}=\left( \frac{2}{\sqrt{\pi }}\right) ^{m-1}$ (Queff\'{e}lec \cite%
{Q} and Defant \& Sevilla-Peris \cite{d2}).
\end{itemize}

Besides their intrinsic mathematical interest, the Bohnenblust--Hille
inequality and Littlewood's $4/3$ theorem are extremely useful tools in many
areas of Mathematics, just to cite some: Operator theory in Banach spaces,
Fourier and harmonic analysis, analytic number theory, etc. (we refer the
interested reader to the monographs \cite{Bl, Ko, Pi}).

More recently a new proof of the Bohnenblust--Hille inequality was presented
in \cite{defant} and this approach, although very abstract, allowed the
calculation of new and sharper constants (see \cite{ps}). However all the
possible constants derived from this approach were calculated by means of
recursive formulae, and the expression of these constants as closed formulae
seems to be, in most cases, an impossible task. The polynomial version of
Bohnenblust--Hille inequality, due to its different nature, presents worse
constants and only in 2011 the long standing problem on the
hypercontractivity of the constants for the polynomial Bohnenblust--Hille
inequality was settled in \cite{ann}, i.e., the authors proved that (for the
polynomial case) there exists $C>1$ so that
\begin{equation*}
\frac{C_{m}}{C_{m-1}}=C
\end{equation*}%
for all $m\geq 1$.

Notwithstanding its eight decades of existence of the Bohnenblust--Hille
inequality, the optimal asymptotic behavior of the constants involved is still unknown. From the previous estimates
we have

\begin{itemize}
\item $C_{m}/C_{m-1}=\sqrt{2}\approx1.4142$ (Kaisjer \cite{Ka} and Davie
\cite{d}),

\item $C_{m}/C_{m-1}=\frac{2}{\sqrt{\pi}}\approx1.1284$ (Queff\'{e}lec \cite%
{Q} and Defant \& Sevilla-Peris \cite{d2}).
\end{itemize}

A very recent numerical study (\cite{mps}), presented the possibility of
improving the above results (now including the case of real scalars as well)
to
\begin{equation*}
\displaystyle\lim_{m\rightarrow \infty }\frac{C_{m}}{C_{m-1}}=2^{1/8}\approx
1.1090,
\end{equation*}%
but this estimate could not be formally proved.

In this article we prove an optimal result for the asymptotic behavior of
the Bohnen\-blust--Hille constants: we provide a family of constants, $%
(C_{m})_{m\in \mathbb{N}}$, satisfying the Bohnenblust--Hille inequality
with the best possible asymptotic growth, i.e.,
\begin{equation*}
\displaystyle\lim_{m\rightarrow \infty }\frac{C_{m}}{C_{m-1}}=1.
\end{equation*}

As a consequence we conclude that the optimal constants in the
Bohnenblust--Hille inequality have an optimal behavior, asymptotically
speaking.

As we mentioned before, for real scalars the Bohnenblust--Hille inequality
is also true; for a long time the best known constants in this setting
seemed to be $C_{m}=2^{\frac{m-1}{2}}.$ For $m=2$ we have $C_{2}=\sqrt{2}$,
which is known to be optimal. Thus, it is possible that this absence of
improvements for the case of real scalars is motivated by a feeling that the
family $C_{m}=2^{\frac{m-1}{2}}$ was a good candidate for being optimal.
However, as we will see, these constants are quite far from the optimality.

From now on, the letters $X_{1},\ldots ,X_{m}$ shall stand for Banach spaces
and $B_{X^{\ast }}$ represents the closed unit ball of the topological dual
of $X$. By $\mathcal{L}(X_{1},\ldots ,X_{m};\mathbb{K})$ we denote the
Banach space of all continuous $m$-linear mappings from $X_{1}\times \cdots
\times X_{m}$ to $\mathbb{K}$ endowed with the sup norm.

If $1\leq p<\infty $, a continuous $m$-linear mapping $U\in \mathcal{L}%
(X_{1},\ldots ,X_{m};\mathbb{K})$ is called multiple $(p;1)$-summing
(denoted by $U\in \Pi _{(p;1)}(X_{1},\ldots ,X_{m};\mathbb{K})$) if there
exists a constant $K_{m}\geq 0$ such that
\begin{equation}
\left( \sum_{j_{1},\ldots ,j_{m}=1}^{N}\left\vert U(x_{j_{1}}^{(1)},\ldots
,x_{j_{m}}^{(m)})\right\vert ^{p}\right) ^{\frac{1}{p}}\leq
K_{m}\prod_{k=1}^{m}\sup_{\varphi _{k}\in B_{X_{k}^{\ast }}}{%
\sum\limits_{j=1}^{N}}\left\vert \varphi _{k}(x_{j}^{(k)})\right\vert
\label{lhs}
\end{equation}%
for every $N\in \mathbb{N}$ and any $x_{j_{k}}^{(k)}\in X_{k}$, $%
j_{k}=1,\ldots ,N$, $k=1,\ldots ,m$. The infimum of the constants satisfying
(\ref{lhs}) is denoted by $\left\Vert U\right\Vert _{\pi (p;1)}$.

An important simple reformulation of the Bohnenblust--Hille inequality shows
that it is equivalent to the assertion that every continuous $m$-linear form
$T:X_{1}\times\cdots\times X_{m}\rightarrow\mathbb{K}$ is multiple $(\frac {%
2m}{m+1};1)$-summing, and the constants involved are the same as those from
the Bohnenblust--Hille inequality. Thus, in this article we shall be dealing
with the Bohnenblust--Hille inequality in the framework of multiple summing
operators.

From now on (and throughout this paper), we let
\begin{equation*}
A_{p}:=\sqrt{2}\left( \frac{\Gamma((p+1)/2)}{\sqrt{\pi}}\right) ^{1/p},
\end{equation*}
where $\Gamma$ denotes the classical Gamma Function. These constants will
appear soon in Theorems 1.1 and 1.2; they are related to the Khinchine
inequality and appear in \cite{haag}, where the optimal constants of the
Khinchine inequality are obtained. More details on the nature of $A_{p}$ will be given in the next section.

The following results (inspired in \cite{defant} and in estimates from \cite%
{haag}) were recently proved in \cite{ps} by the third and fourth authors:

\begin{theorem}
\label{p1}For every positive integer $m$ and real Banach spaces $%
X_{1},\ldots,X_{m},$
\begin{equation*}
\Pi_{(\frac{2m}{m+1};1)}(X_{1},\ldots,X_{m};\mathbb{R})=\mathcal{L}%
(X_{1},...,X_{m};\mathbb{R})\text{ and }\left\Vert .\right\Vert _{\pi (\frac{%
2m}{m+1};1)}\leq C_{\mathbb{R},m}\left\Vert .\right\Vert
\end{equation*}
with%
\begin{align*}
C_{\mathbb{R},2} & =\sqrt{2}, \\
C_{\mathbb{R},3} & =2^{\frac{5}{6}},
\end{align*}%
\begin{equation*}
C_{\mathbb{R},m}=\frac{C_{\mathbb{R},m/2}}{A_{\frac{2m}{m+2}}^{m/2}}
\end{equation*}
for $m$ even and%
\begin{equation*}
C_{\mathbb{R},m}=\left( \frac{C_{\mathbb{R},\frac{m-1}{2}}}{A_{\frac {2m-2}{%
m+1}}^{\frac{m+1}{2}}}\right) ^{\frac{m-1}{2m}}\cdot\left( \frac{C_{\mathbb{R%
},\frac{m+1}{2}}}{A_{\frac{2m+2}{m+3}}^{\frac{m-1}{2}}}\right) ^{\frac{m+1}{%
2m}}
\end{equation*}
for $m$ odd.
\end{theorem}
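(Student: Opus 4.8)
The plan is to prove Theorem~\ref{p1} by an induction on $m$, splitting an $m$-linear form into roughly two halves and applying the Khinchine inequality with the optimal constants $A_p$ from \cite{haag} in each block of variables. For the base cases $m=2$ and $m=3$ one uses the known optimal constant $\sqrt{2}$ for Littlewood's inequality (real case) and a direct estimate giving $C_{\mathbb{R},3}=2^{5/6}$; these are the anchors of the recursion. For the inductive step, given $U\in\mathcal{L}(X_1,\ldots,X_m;\mathbb{R})$, I would fix a collection of vectors $x_{j_k}^{(k)}$ and estimate the mixed $\ell_{2m/(m+1)}$-norm on the left of \eqref{lhs} by interpolating between two extreme summability exponents: summability exponent $2$ in one group of indices (handled by the multilinear Khinchine/Kahane-type averaging, which is where the constants $A_p$ enter) and the full Bohnenblust--Hille exponent in the complementary group (handled by the induction hypothesis applied to an $(m/2)$- or $((m\pm1)/2)$-linear form obtained by freezing the remaining variables, or by averaging against Rademacher signs).

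Concretely, for $m$ even I would write $m=2k$ with $k=m/2$, partition the indices $\{1,\ldots,m\}=\{1,\ldots,k\}\cup\{k+1,\ldots,m\}$, and for each fixed choice of the first $k$ vectors apply the induction hypothesis in the last $k$ variables to get a bound by $C_{\mathbb{R},k}$ times a product of the relevant sup-norms; then sum over the first block of indices using the $\ell_{2k/(k+1)}$-summability and control the interaction by a Khinchine inequality in $\mathbb{R}^k$-valued random variables, whose sharp constant contributes the factor $A_{2m/(m+2)}^{-m/2}$. The exponent $2m/(m+2)$ is exactly $2k/(k+1)$ with $k=m/2$, which is the conjugate-type index forced by matching $2m/(m+1)$ on the outside to $2$ inside; this bookkeeping is what produces the stated formula $C_{\mathbb{R},m}=C_{\mathbb{R},m/2}/A_{2m/(m+2)}^{m/2}$. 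For $m$ odd one cannot split symmetrically, so I would split as $m=\tfrac{m-1}{2}+\tfrac{m+1}{2}$ and run the same argument twice, once with each block playing the ``inner'' role, then combine the two resulting estimates by Hölder with weights $\tfrac{m-1}{2m}$ and $\tfrac{m+1}{2m}$ (which sum to $1$); this geometric mean of the two bounds is precisely the displayed odd-case formula.

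The technical heart — and the step I expect to be the main obstacle — is the mixed-norm Khinchine inequality: one needs, for vector-valued (or iterated) Rademacher averages, an inequality of the form
\begin{equation*}
\left(\sum_{\text{outer}}\Big(\sum_{\text{inner}}|a|^{2}\Big)^{\frac{p}{2}\cdot\frac{1}{p}}\right)^{1/p}\ \le\ A_{p}^{-1}\,\Big\|\sum \varepsilon\, a\Big\|_{L_{p}},
\end{equation*}
iterated over the block of variables, with the constant being exactly a power of $A_p$ and \emph{no extra universal factor}. Getting the constant to be sharp (a clean power of $A_p$ rather than $A_p$ times something) is what makes the whole recursion give the improved asymptotics; this is where the estimates from \cite{haag} are essential, since Haagerup's sharp form of Khinchine is what prevents the accumulation of a spurious exponential factor at each level of the recursion. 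A secondary point to be careful about is that $A_p\le 1$ only for $p$ in the relevant range, so that dividing by powers of $A_p$ does increase the constant in a controlled way; verifying monotonicity of $(C_{\mathbb{R},m})$ and that the recursion is well-posed (the indices $m/2$, $(m\pm1)/2$ are always strictly smaller, so the induction terminates at the base cases) rounds out the argument.
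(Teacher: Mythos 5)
The paper itself does not prove Theorem~\ref{p1}: it explicitly states that Theorems~\ref{p1} and~\ref{p2} ``were recently proved in \cite{ps}'' and cites \cite{defant} and \cite{haag} as the ingredients, so there is no in-paper proof to compare against. Your sketch does, however, track the Defant--Popa--Schwarting framework that \cite{ps} adapts: induction on $m$; a block decomposition of the $m$-linear form into halves (symmetric for $m$ even, sizes $\frac{m-1}{2}$ and $\frac{m+1}{2}$ for $m$ odd with the two resulting bounds geometrically averaged via H\"older with weights $\frac{m-1}{2m}$ and $\frac{m+1}{2m}$); Khinchine with Haagerup's sharp constants $A_p$ feeding the factor $A_{2m/(m+2)}^{-m/2}$; and anchoring at $C_{\mathbb R,2}=\sqrt 2$ and $C_{\mathbb R,3}=2^{5/6}$. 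That is the right high-level route, and you correctly identify the single point that matters for the rest of the paper, namely that sharpness in the Khinchine step is what prevents a spurious universal factor from compounding at each level of the recursion.

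Two concrete cautions. First, the display you call the ``technical heart'' does not typecheck: with the exponent $\frac{p}{2}\cdot\frac{1}{p}=\frac12$ the left-hand side is $\bigl(\sum_{\mathrm{outer}}\|a_{\mathrm{outer},\cdot}\|_{\ell_2}\bigr)^{1/p}$, which scales like $\lambda^{1/p}$ while the right-hand side scales like $\lambda$, so the inequality as written cannot hold. What the argument actually needs is not a single vector-valued Khinchine inequality but the combination of a Blei-type mixed-norm inequality (bounding the $\ell_{2m/(m+1)}$ norm of a multi-indexed array by a geometric mean of mixed $\ell_1(\ell_2)$-type norms over the blocks) together with the scalar Khinchine inequality applied inside each block; in \cite{defant} this is packaged as a theorem on coordinatewise multiple summing operators. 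Your phrase ``interpolating between two extreme summability exponents'' gestures at the Blei step, but the way it is written (exponent $2$ in one block, the full BH exponent in the other) is not the correct bookkeeping; the exponent $2m/(m+2)$ is the BH exponent $2k/(k+1)$ for the half-sized $k=m/2$-linear restriction, and $A_{2k/(k+1)}$ enters from Khinchine at that exponent. Second, be careful with the claim that $A_p\le 1$ ``in the relevant range'': the paper uses the $\Gamma$-formula for $A_p$ and notes that it is Haagerup's sharp constant only for $p\ge p_0\approx 1.8474$; since $2m/(m+2)<p_0$ for small even $m$, the monotonicity/validity discussion for small $m$ is not automatic and is a detail you would have to settle by consulting \cite{haag} or \cite{ps} rather than asserting in passing.
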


\begin{theorem}
\label{p2}For every positive integer $m$ and every complex Banach spaces $%
X_{1},\ldots,X_{m},$
\begin{equation*}
\Pi_{(\frac{2m}{m+1};1)}(X_{1},\ldots,X_{m};\mathbb{C})=\mathcal{L}%
(X_{1},\ldots,X_{m};\mathbb{C})\text{ and }\left\Vert .\right\Vert _{\pi (%
\frac{2m}{m+1};1)}\leq C_{\mathbb{C},m}\left\Vert .\right\Vert
\end{equation*}
with%
\begin{equation*}
C_{\mathbb{C},m}=\left( \frac{2}{\sqrt{\pi}}\right) ^{m-1}
\end{equation*}
$\text{for }m\in\{2,3,4,5,6\},$%
\begin{equation*}
C_{\mathbb{C},m}=\frac{C_{\mathbb{C},m/2}}{A_{\frac{2m}{m+2}}^{m/2}}
\end{equation*}
for $m>6$ even and%
\begin{equation*}
C_{\mathbb{C},m}=\left( \frac{C_{\mathbb{C},\frac{m-1}{2}}}{A_{\frac {2m-2}{%
m+1}}^{\frac{m+1}{2}}}\right) ^{\frac{m-1}{2m}}\cdot\left( \frac{C_{\mathbb{C%
},\frac{m+1}{2}}}{A_{\frac{2m+2}{m+3}}^{\frac{m-1}{2}}}\right) ^{\frac{m+1}{%
2m}}
\end{equation*}
for $m>5$ odd.
\end{theorem}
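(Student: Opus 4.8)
The plan is to prove \thmref{p2} by strong induction on $m$; \thmref{p1} follows the same pattern, the only genuine differences being the constants fed in as base cases and the range of exponents on which the scalar inequality used in the key step is sharp. By the reformulation of the Bohnenblust--Hille inequality in terms of multiple $(\tfrac{2m}{m+1};1)$-summing operators recalled above, it suffices to show that for every $m$-linear form $U\colon\mathbb{C}^{N}\times\cdots\times\mathbb{C}^{N}\to\mathbb{C}$ and every $N$,
\[
\Bigg(\sum_{i_{1},\dots,i_{m}=1}^{N}\big|U(e_{i_{1}},\dots,e_{i_{m}})\big|^{\frac{2m}{m+1}}\Bigg)^{\frac{m+1}{2m}}\le C_{\mathbb{C},m}\,\|U\|.
\]
For $m\in\{2,\dots,6\}$ this is exactly the estimate with constant $(2/\sqrt{\pi})^{m-1}$ of Queff\'{e}lec \cite{Q} and of Defant and Sevilla-Peris \cite{d2}, and these five cases are precisely the seeds that the recursion for larger $m$ will call upon.

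For the inductive step fix $m\ge 7$ and split the variable set as a disjoint union $\{1,\dots,m\}=A\sqcup B$ with $|A|=\lfloor m/2\rfloor$ and $|B|=\lceil m/2\rceil$; put $k_{A}=|A|$, $k_{B}=|B|$ and let $s_{A}=\tfrac{2k_{A}}{k_{A}+1}$, $s_{B}=\tfrac{2k_{B}}{k_{B}+1}$ be the Bohnenblust--Hille exponents attached to blocks of those sizes (both lie in $(0,2)$). Writing $a_{\mathbf{i}}=U(e_{i_{1}},\dots,e_{i_{m}})$, the backbone of the argument is the two-block form of Blei's inequality: it controls the diagonal $\ell_{2m/(m+1)}$-norm of $(a_{\mathbf{i}})$ by the product, with weights $k_{A}/m$ and $k_{B}/m$, of the two mixed norms
\[
\Bigg(\sum_{\mathbf{i}_{A}}\Big(\sum_{\mathbf{i}_{B}}|a_{\mathbf{i}}|^{2}\Big)^{\!s_{A}/2}\Bigg)^{\!1/s_{A}}\qquad\text{and}\qquad\Bigg(\sum_{\mathbf{i}_{B}}\Big(\sum_{\mathbf{i}_{A}}|a_{\mathbf{i}}|^{2}\Big)^{\!s_{B}/2}\Bigg)^{\!1/s_{B}};
\]
a short arithmetic check with the exponents shows this interpolation is legitimate and yields precisely the exponent $2m/(m+1)$ (for $m$ even it reduces to the symmetric case $k_{A}=k_{B}=m/2$, $s_{A}=s_{B}=\tfrac{2m}{m+2}$, with weights $\tfrac12,\tfrac12$).

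It then remains to bound each of the two mixed norms by a constant times $\|U\|$. For the first one, fix $\mathbf{i}_{A}$ and apply a Khinchine-type inequality with the Haagerup constants \cite{haag}, coordinatewise over the $k_{B}$ indices of $B$ and with inner exponent $s_{A}$: this replaces $\big(\sum_{\mathbf{i}_{B}}|a_{\mathbf{i}}|^{2}\big)^{1/2}$ by $A_{s_{A}}^{-k_{B}}$ times an $L_{s_{A}}$-average (over the randomizing variables) of $|U|$ with the $B$-slots occupied by unit vectors of $\mathbb{C}^{N}$, hence by a form with sup-norm still $\le\|U\|$; summing the $\ell_{s_{A}}$-powers over $\mathbf{i}_{A}$ and invoking the inductive hypothesis for the resulting $k_{A}$-linear forms --- whose Bohnenblust--Hille exponent is exactly $s_{A}$ --- bounds the first mixed norm by $A_{s_{A}}^{-k_{B}}C_{\mathbb{C},k_{A}}\|U\|$. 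Symmetrically the second is $\le A_{s_{B}}^{-k_{A}}C_{\mathbb{C},k_{B}}\|U\|$. Raising these to the powers $k_{A}/m$, $k_{B}/m$ and multiplying gives the constant
\[
\big(C_{\mathbb{C},k_{A}}A_{s_{A}}^{-k_{B}}\big)^{k_{A}/m}\big(C_{\mathbb{C},k_{B}}A_{s_{B}}^{-k_{A}}\big)^{k_{B}/m},
\]
which equals $C_{\mathbb{C},m/2}/A_{2m/(m+2)}^{m/2}$ for $m$ even and the product displayed in the statement for $m$ odd, closing the induction.

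I expect the real obstacle to be making the Khinchine step deliver the constant $A_{p}$ on the nose: one must choose the randomizing system (signs, Steinhaus variables, or a Gaussian comparison) and the inner exponent so that the scalar inequality $\big(\sum_{j}|b_{j}|^{2}\big)^{1/2}\le A_{p}^{-1}\big(\mathbb{E}\,|\sum_{j}b_{j}\varepsilon_{j}|^{p}\big)^{1/p}$ is available while the randomized form keeps sup-norm $\le\|U\|$; it is precisely here that the real and complex settings diverge --- the optimal Khinchine constant of \cite{haag} coincides with $A_{p}$ only on part of the relevant range of exponents --- which is why the lists of base cases in \thmref{p1} and \thmref{p2} differ. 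The remaining points (that Blei's inequality applies with the stated weights, that the iterated Khinchine estimate survives the necessary Minkowski rearrangements, and the bookkeeping identifying the resulting constant with the one in the statement) are routine.
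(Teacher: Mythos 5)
The paper does not actually prove \thmref{p2}: it is imported verbatim from \cite{ps} (built on the machinery of \cite{defant}), and this article only uses it as a black box for the asymptotic analysis in the later sections. So there is no in-paper proof to compare against. That said, the strategy you sketch --- the two-block split $\{1,\dots,m\}=A\sqcup B$ with $|A|=\lfloor m/2\rfloor$, $|B|=\lceil m/2\rceil$, a Blei-type mixed-norm interpolation landing on the exponent $2m/(m+1)$ with weights $k_A/m,\,k_B/m$, an iterated Khinchine estimate supplying the factors $A_{s_A}^{-k_B}$ and $A_{s_B}^{-k_A}$, and closure of the induction via the Bohnenblust--Hille estimate on the sub-blocks --- is exactly the mechanism behind \cite{ps} and \cite{defant}, and your exponent bookkeeping does reproduce the stated recursions for both parities.

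Two points need attention. First, the Khinchine step you flag as ``the real obstacle'' is the genuine crux, and your sketch leaves it open: the recursion already fires at $m=7,8$, where the half-block exponents $s=2k/(k+1)$ sit near $1.5$--$1.6$, well below $p_{0}\approx 1.8474$. On that range Haagerup's optimal Rademacher constant is $2^{1/2-1/s}$, strictly smaller than the Gamma-formula value $A_{s}$, so a plain Rademacher randomization does \emph{not} deliver $A_{s}^{-1}$ as the scalar constant. One must choose the randomizing system (Steinhaus variables, or a Gaussian comparison with a truncation/limiting argument to retain the sup-norm bound) so that the Gamma expression is the correct constant on the whole range the recursion visits; your proposal names the options but ``expects'' the accounting to work out rather than verifying it, which is exactly where a written-out proof would have to do real work. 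Second, your explanation for the differing base-case lists in Theorems~\ref{p1} and \ref{p2} is not quite right: the same $A_{p}$ appears in both recursions, so the $p_{0}$ threshold cannot be what distinguishes them. The complex list runs to $m=6$ simply because the explicit Queff\'elec / Defant--Sevilla-Peris bound $(2/\sqrt{\pi})^{m-1}$ beats the recursive output through $m=6$, whereas in the real case the available seeds $\sqrt{2}$ and $2^{5/6}$ are only advantageous for $m=2,3$.
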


In this article we shall prove that the asymptotic behavior of the above
constants is the best possible. This result (the optimality of the
asymptotic growth of the constants of Bohnenblust--Hille inequality) seems
quite surprising since all the constants involved in all similar results in
the theory of multiple summing operators grow at an exponential rate. In
fact, this is the case of the previous estimates of the constants of
Bohnenblust--Hille inequality from \cite{d, d2, Ka, Q} and also the case of
the multilinear generalization (for multiple summing operators) of
Grothendieck's theorem for absolutely summing operators. More precisely, the
multilinear Grothendieck Theorem \cite{Bo} states that for every positive
integer $m$, every continuous $m$-linear operator $U:\ell _{1}\times \cdots
\times \ell _{1}\rightarrow \mathbb{C}$ is multiple $(1;1)$-summing and
\begin{equation*}
\left\Vert U\right\Vert _{\pi (1;1)}\leq \left( \frac{2}{\sqrt{\pi }}\right)
^{m}\left\Vert U\right\Vert .
\end{equation*}%
Similar results, which can also be found in \cite{Bo} (called \emph{%
coincidence} results), always present constants that, as we mentioned
earlier, grow exponentially following the pattern of certain power of $m;$
this is one of the reasons why we consider the optimality of the asymptotic growth
of the constants in the Bohnenblust--Hille inequality quite a surprising
result.


\section{Bohnenblust and Hille meet Euler}

In this section we shall see that the limits of the terms that appear in the
previous sections are related to Euler's famous constant $\gamma$. Let us
recall that $\gamma$ is classically defined as
\begin{equation*}
\gamma= \lim_{m \rightarrow\infty} \left( \sum_{k=1}^{m} \frac{1}{k} - \log
m\right) \approx 0.5772,
\end{equation*}
and it is still not known (since its first appearance in 1734 in Euler's
\emph{De Progressionibus Harmonicis Observationes}) whether this value is
algebraic or transcendental.

The results and calculations given in this section are part of the essential
tools that shall be used to prove the main result of this article. First of
all, and by using the classical known properties of the Gamma Function, we
obtain
\begin{equation*}
\lim_{x\rightarrow 0}\left( \frac{\Gamma \left( \frac{1}{2}-x\right) }{%
\Gamma \left( \frac{1}{2}\right) }\right) ^{1/x}=4e^{\gamma },
\end{equation*}%
and, thus,
\begin{equation*}
\lim_{x\rightarrow 0}\left( \frac{\Gamma \left( \frac{3}{2}-x\right) }{%
\Gamma \left( \frac{3}{2}\right) }\right) ^{1/x}=4e^{\gamma -2}
\end{equation*}%
which gives
\begin{equation*}
\lim_{m\rightarrow \infty }\left( \frac{\Gamma \left( \frac{3m+2}{2m+4}%
\right) }{\Gamma \left( \frac{3}{2}\right) }\right) ^{m}=16e^{2\gamma -4},
\end{equation*}%
obtaining
\begin{equation*}
\lim_{m\rightarrow \infty }\left[ 2^{m/4}\cdot \left( \frac{\Gamma \left(
\frac{\frac{2m}{m+2}+1}{2}\right) }{\sqrt{\pi }}\right) ^{(m+2)/4}\right] =%
\frac{\sqrt{2}}{e^{1-\frac{1}{2}\gamma }}.
\end{equation*}%
From \cite{haag} it is known that for $1<p_{0}<2$ so that
\begin{equation*}
\Gamma \left( \frac{p_{0}+1}{2}\right) =\frac{\sqrt{\pi }}{2}
\end{equation*}%
we have
\begin{equation*}
A_{p}=\sqrt{2}\left( \frac{\Gamma ((p+1)/2)}{\sqrt{\pi }}\right) ^{1/p},
\end{equation*}%
whenever $p_{0}\leq p<2$ (numerical calculations estimate $p_{0}\approx
1.8474$). So, since
\begin{equation*}
\lim_{m\rightarrow \infty }\frac{2m}{m+2}=2>p_{0},
\end{equation*}%
we finally get that (for both, the real and complex settings)
\begin{equation}
\lim_{m\rightarrow \infty }\left( A_{\frac{2m}{m+2}}^{m/2}\right) ^{-1}=%
\frac{e^{1-\frac{1}{2}\gamma }}{\sqrt{2}}\approx 1.4402.  \label{jt}
\end{equation}%
Analogously, it can be checked that (for $m$ odd)
\begin{equation}
\lim_{m\rightarrow \infty }\left( A_{\frac{2m-2}{m+1}}^{(m+1)/2}\right)
^{(-1)\cdot \frac{m-1}{2m}}=\lim_{m\rightarrow \infty }\left( A_{\frac{2m+2}{%
m+3}}^{(m-1)/2}\right) ^{(-1)\cdot \frac{m+1}{2m}}=\frac{e^{\frac{1}{2}-%
\frac{1}{4}\gamma }}{2^{1/4}}\approx 1.2001.  \label{gt}
\end{equation}%
%


\section{The Proof}

\noindent From (\ref{jt}) and (\ref{gt}) we have (for both, the real and
complex settings) that
\begin{equation*}
\displaystyle\lim_{n\rightarrow \infty }\frac{C_{2n}}{C_{n}}=\frac{e^{1-%
\frac{1}{2}\gamma }}{\sqrt{2}}
\end{equation*}%
and
\begin{equation*}
\displaystyle\lim_{n\rightarrow \infty }\frac{C_{2n+1}}{\left( C_{n}\right)
^{\frac{2n}{2\left( 2n+1\right) }}.\left( C_{n+1}\right) ^{\frac{2n+2}{%
2\left( 2n+1\right) }}}=\left( \frac{e^{\frac{1}{2}-\frac{1}{4}\gamma }}{%
2^{1/4}}\right) .\left( \frac{e^{\frac{1}{2}-\frac{1}{4}\gamma }}{2^{1/4}}%
\right) =\frac{e^{1-\frac{1}{2}\gamma }}{\sqrt{2}}.
\end{equation*}%
In other words, for \emph{large} values of $n$ we have the following
equivalences
\begin{align*}
C_{2n}& \sim \left( \frac{e^{1-\frac{1}{2}\gamma }}{\sqrt{2}}\right) C_{n},
\\
C_{2n+1}& \sim \left( \frac{e^{1-\frac{1}{2}\gamma }}{\sqrt{2}}\right)
\left( C_{n}\right) ^{\frac{n}{2n+1}}\left( C_{n+1}\right) ^{\frac{n+1}{2n+1}%
},
\end{align*}%
as well from Theorem 1.1, Theorem 1.2, (\ref{jt}) and (\ref{gt}) we have the
next asymptotical identities:
\begin{equation}
\frac{C_{2n}}{C_{2n-1}}\sim \frac{C_{n}}{\left( C_{n-1}\right) ^{\frac{n-1}{%
2n-1}}\left( C_{n}\right) ^{\frac{n}{2n-1}}}\sim \left( \frac{C_{n}}{C_{n-1}}%
\right) ^{\frac{n-1}{2n-1}}  \label{add44}
\end{equation}%
and
\begin{equation}
\frac{C_{2n+1}}{C_{2n}}\sim \frac{\left( C_{n}\right) ^{\frac{n}{2n+1}%
}\left( C_{n+1}\right) ^{\frac{n+1}{2n+1}}}{C_{n}}\sim \left( \frac{C_{n+1}}{%
C_{n}}\right) ^{\frac{n+1}{2n+1}}.  \label{add55}
\end{equation}

\noindent Since $\displaystyle \left( \frac{C_{2n}}{C_{n}}\right)
_{n=1}^{\infty}$ converges, it is bounded by some constant $C>1$. Let, for $%
n \in \mathbb{N}$,
\begin{equation*}
D_{n}:=\frac{C_{n+1}}{C_{n}}.
\end{equation*}
Notice also that, since $\left(C_{n}\right)_{n=1}^{\infty}$ is
non-decreasing, $D_{n}\geq 1$. Thus
\begin{equation}
C >\frac{C_{2n}}{C_{n}}={\textstyle\prod\limits_{j=n}^{2n-1}}D_{j}\geq
D_{n}\geq 1  \label{di1}
\end{equation}
for every $n$.

Now we show that $\left( D_{n}\right) _{n=1}^{\infty}$ is convergent and
that its limit must be $1$. We split its proof into four \emph{claims}. The
first one of these claims is an almost trivial statement, and we spare the
details of its proof to the interested reader:

\begin{claim}
\label{claim1} The sequence $\left( D_{n}\right) _{n=1}^{\infty}$ satisfies
the following asymptotical equalities:

\begin{enumerate}
\item[(1)] $D_{2n-1}\sim\sqrt{D_{n-1}}$,

\item[(2)] $D_{2n}\sim\sqrt{D_{n}}$.
\end{enumerate}
\end{claim}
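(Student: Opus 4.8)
The plan is to read off both asymptotic equalities directly from the identities (\ref{add44}) and (\ref{add55}), which have already been established from Theorem~\ref{p1}, Theorem~\ref{p2}, (\ref{jt}) and (\ref{gt}). Recalling that $D_n=C_{n+1}/C_n$, identity (\ref{add44}) says precisely
\[
D_{2n-1}=\frac{C_{2n}}{C_{2n-1}}\sim\left(\frac{C_n}{C_{n-1}}\right)^{\frac{n-1}{2n-1}}=D_{n-1}^{\frac{n-1}{2n-1}},
\]
while (\ref{add55}) says $D_{2n}=C_{2n+1}/C_{2n}\sim D_n^{\frac{n+1}{2n+1}}$. So the only thing left to do is to replace the exponents $\frac{n-1}{2n-1}$ and $\frac{n+1}{2n+1}$, which are not exactly $\tfrac12$, by $\tfrac12$.

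First I would record the elementary algebraic identities $\frac{n-1}{2n-1}=\frac12-\frac{1}{2(2n-1)}$ and $\frac{n+1}{2n+1}=\frac12+\frac{1}{2(2n+1)}$, so that
\[
D_{n-1}^{\frac{n-1}{2n-1}}=\sqrt{D_{n-1}}\cdot D_{n-1}^{-\frac{1}{2(2n-1)}},\qquad D_n^{\frac{n+1}{2n+1}}=\sqrt{D_n}\cdot D_n^{\frac{1}{2(2n+1)}}.
\]
Then I would invoke (\ref{di1}), which provides a fixed constant $C>1$ with $1\leq D_k\leq C$ for every $k$. Consequently the two correction factors obey $C^{-1/(2(2n-1))}\leq D_{n-1}^{-1/(2(2n-1))}\leq 1$ and $1\leq D_n^{1/(2(2n+1))}\leq C^{1/(2(2n+1))}$; since in each case the exponent tends to $0$ while the base stays in the fixed compact interval $[1,C]\subset(0,\infty)$, both correction factors converge to $1$ as $n\to\infty$. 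Hence $D_{2n-1}\sim\sqrt{D_{n-1}}$ and $D_{2n}\sim\sqrt{D_n}$, which are assertions (1) and (2).

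There is essentially no obstacle here, which is why the authors call the claim almost trivial; the single point deserving a word of care is that the exponents appearing in (\ref{add44}) and (\ref{add55}) are not $\tfrac12$ but merely converge to it, and the resulting discrepancy is harmless only because $(D_n)$ is known to be bounded — exactly the content of (\ref{di1}), which itself rests on the convergence of $(C_{2n}/C_n)$. All of the genuinely analytic work (the Gamma-function asymptotics and the computations of the limits $(\ref{jt})$ and $(\ref{gt})$) was carried out in Sections~1 and 2, so proving this claim is just bookkeeping with the exponents.
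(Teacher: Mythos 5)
Your proof is correct, and it is also almost certainly the argument the authors had in mind: the paper explicitly declines to prove Claim~\ref{claim1}, calling it ``almost trivial'' and leaving the details to the reader, so there is no paper proof to compare against word for word. What you do is exactly what the surrounding text prepares one to do: read $D_{2n-1}\sim D_{n-1}^{(n-1)/(2n-1)}$ and $D_{2n}\sim D_{n}^{(n+1)/(2n+1)}$ directly off (\ref{add44}) and (\ref{add55}), write the exponents as $\tfrac12\mp\tfrac{1}{2(2n\mp1)}$, and kill the correction factors using the uniform bound $1\leq D_{k}<C$ from (\ref{di1}), which is established before the claim is stated and does not depend on it. The algebra in (\ref{add44})--(\ref{add55}) checks out, the squeeze on $D_{n-1}^{-1/(2(2n-1))}$ and $D_{n}^{1/(2(2n+1))}$ is valid precisely because the base lies in the fixed compact interval $[1,C]$, and you correctly isolate that boundedness as the one non-formal ingredient. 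No gaps.
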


The second of the claims is a crucial tool in order to prove that $%
\left(D_{n}\right)_{n=1}^{\infty}$ converges to $1$:

\begin{claim}
\label{claim2} Let $K>1$. Suppose that there exists $m_{0}$ such that $%
n>m_{0}$ implies $1\leq D_{n}<K$. Then there exists $m_{1} \geq m_{0}$ such
that $n > m_{1}$ implies $1\leq D_{n}<K^{5/8}$.
\end{claim}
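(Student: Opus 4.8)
The plan is to bootstrap the hypothesis ``$1\le D_n<K$ for $n>m_0$'' into the sharper bound ``$1\le D_n<K^{5/8}$ for $n$ large'' by feeding it into the two asymptotic relations of Claim~\ref{claim1}. For the even-indexed terms, $D_{2n}\sim\sqrt{D_n}$; if we already know $D_n<K$ for all large $n$, then $D_{2n}<\sqrt{K}$ up to an asymptotically negligible factor, which only accounts for the even indices. The extra improvement from $K^{1/2}$ down to $K^{5/8}$ (note $5/8>1/2$, so this is a weaker-looking but still nontrivial gain uniform over \emph{all} residues) must come from iterating: a term $D_n$ with $n$ in a given dyadic block is controlled by $D_{\lfloor n/2\rfloor}$ via Claim~\ref{claim1}, and that term in turn lies in the previous block. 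So first I would make the $\sim$ in Claim~\ref{claim1} quantitative: fix $\varepsilon>0$ and choose $m_1\ge m_0$ so that for $n>m_1$ one has $D_{2n}<\sqrt{D_n}\,(1+\varepsilon)$ and $D_{2n-1}<\sqrt{D_{n-1}}\,(1+\varepsilon)$, i.e.\ $D_n<\sqrt{D_{\lceil n/2\rceil \text{ or }\lfloor\cdot\rfloor}}\,(1+\varepsilon)$ for every $n$ above the threshold, after absorbing the parity bookkeeping.

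Next I would iterate this inequality. Starting from any $n>m_1$, applying the halving relation $k$ times gives
\begin{equation*}
D_n < D_{n_k}^{1/2^k}\,(1+\varepsilon)^{1/2^{k-1}+\cdots+1/2+1} < D_{n_k}^{1/2^k}\,(1+\varepsilon)^2,
\end{equation*}
where $n_k$ is obtained from $n$ by $k$ successive (floor/ceiling) halvings. One stops the iteration at the last step for which $n_k$ is still $>m_0$ but $n_{k+1}\le m_0$; at that point $n_k$ lies in a bounded range $(m_0, 2m_0]$, so $D_{n_k}$ is bounded by a fixed constant $M$ depending only on $m_0$. This yields $D_n< M^{1/2^k}(1+\varepsilon)^2$. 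The point of the exponent $5/8$ (rather than $1/2$) is to have room to spare: since $k\to\infty$ as $n\to\infty$, we have $M^{1/2^k}\to1$, and more usefully $M^{1/2^k}<K^{1/8}$ once $n$ (hence $k$) is large enough, because $K>1$. Meanwhile I would use the \emph{intermediate} value of the iteration: stopping after a single halving and using the hypothesized bound $D_{n_1}<K$ directly gives $D_n<K^{1/2}(1+\varepsilon)$, and combining the one-step bound with the tail estimate $M^{1/2^k}<K^{1/8}$ and choosing $\varepsilon$ small enough that $(1+\varepsilon)^2<K^{1/8}$ (possible since $K>1$) produces $D_n<K^{1/2}\cdot K^{1/8}=K^{5/8}$ for all $n>m_1$ after enlarging $m_1$.

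The main obstacle I anticipate is the parity/index bookkeeping: Claim~\ref{claim1} relates $D_{2n}$ to $D_n$ and $D_{2n-1}$ to $D_{n-1}$, not cleanly to $D_{\lfloor n/2\rfloor}$, so one must check that successive halvings of an arbitrary integer $n$ (alternately subtracting $1$ before halving, depending on parity) stay above the threshold $m_0$ for the right number of steps and land in a controlled range, and that the accumulated error factors $(1+\varepsilon)^{\sum 2^{-j}}$ remain bounded (they do, by $(1+\varepsilon)^2$, since the geometric sum is $<2$). A second, minor point is to make ``asymptotically'' in Claim~\ref{claim1} precise enough that the multiplicative errors can be pushed below $K^{1/8}$ uniformly; since the ratios $C_{2n}/C_n$ and the $A_p$-type factors from \eqref{jt} and \eqref{gt} converge, this is a matter of choosing $m_1$ large, and the constant $C$ from \eqref{di1} guarantees $D_{n_k}$ never exceeds a fixed bound, so no circularity arises.
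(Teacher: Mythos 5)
Your core idea is exactly the paper's: apply the halving relations of Claim~\ref{claim1} \emph{once}, use the hypothesis $D_n<K$ on the halved index, and absorb the asymptotic error factor into the gap between $K^{1/2}$ and $K^{5/8}$. The paper fixes $\varepsilon<K^{1/8}-1$, finds $n_0,n_1$ so that $D_{2n-1}<(1+\varepsilon)\sqrt{D_{n-1}}<K^{1/8}\cdot K^{1/2}=K^{5/8}$ for $n>n_0$ and similarly $D_{2n}<K^{5/8}$ for $n>n_1$, and then sets $m_1=2\max\{n_0,n_1\}+1$ so that every $n>m_1$ is either $2q$ or $2q-1$ with $q$ above the thresholds. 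That is precisely the ``stopping after a single halving'' clause in your write-up, and it already closes the proof.

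Where your proposal goes astray is in the surrounding scaffolding. The iterated-halving argument, the constant $M$, and the quantity $M^{1/2^k}$ are never used: once you have the one-step bound $D_n<K^{1/2}(1+\varepsilon)$, the only thing you need is $(1+\varepsilon)<K^{1/8}$, and you are done. Your final sentence, ``combining the one-step bound with the tail estimate $M^{1/2^k}<K^{1/8}$ \ldots\ produces $D_n<K^{1/2}\cdot K^{1/8}$,'' is not a valid deduction --- the one-step bound and the full-iteration bound are two separate estimates, and $M^{1/2^k}$ does not appear as a factor in the one-step bound, so there is nothing to ``combine.'' The parity bookkeeping you flag as the ``main obstacle'' is also much lighter than you fear: with a single halving there is no dyadic descent to control; one simply takes $m_1$ large enough that $n>m_1$ forces $q>\max\{n_0,n_1\}$ in the decomposition $n=2q$ or $n=2q-1$, exactly as the paper does. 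Strip out the iteration and the $M^{1/2^k}$ clause, keep the one-step estimate with $(1+\varepsilon)<K^{1/8}$, and your proof coincides with the paper's.
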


\begin{proof}
From (1) in Claim \ref{claim1} we know that $\displaystyle %
\lim_{n\rightarrow\infty}\frac{D_{2n-1}}{\sqrt{D_{n-1}}}=1.$ Given $%
\varepsilon>0$ satisfying $\varepsilon<K^{\frac{1}{8}}-1$ there exists $%
n_{0}>m_{0}+1$ such that $n>n_{0}$ implies
\begin{equation*}
\frac{D_{2n-1}}{\sqrt{D_{n-1}}}<(1+\varepsilon)<K^{1/8}.
\end{equation*}
Therefore
\begin{equation}
D_{2n-1}<K^{1/8}\sqrt{D_{n-1}}<K^{5/8}.  \label{dd1}
\end{equation}
In a similar fashion, by (2) in Claim \ref{claim1}, there exists $%
n_{1}>m_{0}+1$ such that $n>n_{1}$ implies
\begin{equation*}
\frac{D_{2n}}{\sqrt{D_{n}}}<(1+\varepsilon)<K^{1/8}
\end{equation*}
and hence
\begin{equation}
D_{2n}<K^{1/8}\sqrt{D_{n}}<K^{5/8}.  \label{dd2}
\end{equation}

Let $n_{2}=\max\{n_{0},n_{1}\}$ and $m_{1}=2n_{2}+1$. It is follows from (%
\ref{dd1}) and (\ref{dd2}) that $n>m_{1}$ implies
\begin{equation}
1\leq D_{n}<K^{5/8}.  \label{dd33}
\end{equation}

To see this let $n>m_{1}$ and let $q$ be a positive integer such that $n=2q$
if $n$ is even and $n=2q-1$ if $n$ is odd. Now if $n$ is odd we have $%
2q-1=n>m_{1}=2n_{2}+1$, and therefore $q>n_{2}>n_{0}$ and it follows from (%
\ref{dd1}) that
\begin{equation*}
D_{n}=D_{2q-1}<K^{5/8}.
\end{equation*}
And if $n$ is even we have $2q=n>m_{1}=2n_{2}+1$, and a it follows from (\ref%
{dd2}) that
\begin{equation*}
D_{n}=D_{2q}<K^{5/8}.
\end{equation*}
\end{proof}

\begin{claim}
\label{claim3} Let $L>1$. Suppose that there exists $m_{0}$ such that $%
n>m_{0}$ implies $1\leq D_{n}<L$. Then there exists $m_{2}\geq m_{0}$ such
that $n>m_{2}$ implies
\begin{equation}
1\leq D_{n}<\sqrt{L}.  \label{ssq}
\end{equation}
\end{claim}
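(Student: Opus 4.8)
The plan is to bootstrap from Claim~\ref{claim2} by iterating it. Claim~\ref{claim2} says that an eventual bound by $K$ can be improved to an eventual bound by $K^{5/8}$; iterating, an eventual bound by $K$ yields an eventual bound by $K^{(5/8)^j}$ for every $j$. Since $5/8<1$, we have $(5/8)^j\to 0$, so for each fixed target exponent we can pick $j$ large enough. Concretely, given $L>1$, choose $j$ so that $(5/8)^j\leq 1/2$; then since $1\leq D_n<L$ for $n>m_0$, applying Claim~\ref{claim2} exactly $j$ times produces $m_2\geq m_0$ such that $n>m_2$ implies $1\leq D_n<L^{(5/8)^j}\leq L^{1/2}=\sqrt L$ (using $L>1$ so that the exponent monotonicity goes the right way). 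That is precisely \eqref{ssq}.

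The key steps, in order: first note that the hypothesis of Claim~\ref{claim3} is exactly the hypothesis of Claim~\ref{claim2} with $K=L$. Second, apply Claim~\ref{claim2} to get $m_0^{(1)}\geq m_0$ with $1\leq D_n<L^{5/8}$ for $n>m_0^{(1)}$. Third, observe $L^{5/8}>1$, so the hypothesis of Claim~\ref{claim2} holds again with $K=L^{5/8}$ and $m_0$ replaced by $m_0^{(1)}$; apply it to get $m_0^{(2)}\geq m_0^{(1)}$ with $1\leq D_n<(L^{5/8})^{5/8}=L^{(5/8)^2}$ for $n>m_0^{(2)}$. Fourth, iterate: after $j$ applications we obtain $m_0^{(j)}\geq\cdots\geq m_0$ with $1\leq D_n<L^{(5/8)^j}$ for $n>m_0^{(j)}$. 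Fifth, since $(5/8)^j\to 0$ there is $j$ with $(5/8)^j\leq 1/2$; because $L>1$, $L^{(5/8)^j}\leq L^{1/2}=\sqrt L$, so setting $m_2:=m_0^{(j)}$ finishes the proof.

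The only point requiring a little care — and really the ``main obstacle,'' though it is minor — is making the induction rigorous: one should state the inductive claim as ``for every $j\in\mathbb N$ there exists $m_0^{(j)}\geq m_0$ such that $n>m_0^{(j)}$ implies $1\leq D_n<L^{(5/8)^j}$,'' with the base case $j=0$ being the hypothesis and the inductive step being a single invocation of Claim~\ref{claim2} (legitimate because $L^{(5/8)^j}>1$ whenever $L>1$). One must also record the monotonicity fact that $t\mapsto L^t$ is increasing for $L>1$, so that $(5/8)^j\leq 1/2$ gives $L^{(5/8)^j}\leq\sqrt L$, and that the nested sequence $m_0^{(j)}$ stays at least $m_0$, which is immediate since each application of Claim~\ref{claim2} returns an $m_1\geq m_0$. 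No new analysis of $(D_n)$ is needed; the whole argument is a finite self-improvement loop feeding on Claim~\ref{claim2}.
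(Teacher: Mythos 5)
Your proof is correct and follows essentially the same strategy as the paper: iterate Claim~\ref{claim2} to drive the exponent on $L$ below $1/2$. The only difference is cosmetic: the paper observes that $(5/8)^2 = 25/64 < 1/2$, so exactly two applications of Claim~\ref{claim2} already suffice, whereas you invoke $(5/8)^j \to 0$ and pick an unspecified $j$; both are fine, but the paper's version avoids the induction you flag as the ``main obstacle.''
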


\begin{proof}
From Claim \ref{claim2}, with $K=L$, we know that there exists a positive
integer $m_{1}$ so that $1\leq D_{n}<L^{5/8}$ for all $n>m_{1}.$ Using again
Claim \ref{claim2}, this time with $K=L^{5/8}$, we can assure the existence
of a positive integer $m_{2}$ such that
\begin{equation*}
1\leq D_{n}<\left( L^{5/8}\right) ^{5/8}
\end{equation*}%
for all $n>m_{2}$. Since $\left( \frac{5}{8}\right) ^{2}<\frac{1}{2}$ we
have (\ref{ssq}).
\end{proof}

A recursive application of the above result together with the upper bound in
(\ref{di1}) allows us to obtain a better upper bound (which approaches $1$)
as $n$ goes to infinity:

\begin{claim}
\label{claim4} Let $C$ be an upper bound for $\left(D_{n}\right)_{n=1}^{%
\infty}$ in equation \eqref{di1}. For every non negative integer $s$ there
exists a positive integer $n_{0}$ (depending on $s$) such that $n>n_{0}$
implies
\begin{equation*}
1\leq D_{n}<C^{2^{-s}}.
\end{equation*}
\end{claim}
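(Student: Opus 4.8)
The plan is to prove Claim~\ref{claim4} by induction on the non-negative integer $s$, using Claim~\ref{claim3} as the inductive engine. For the base case $s=0$, equation~\eqref{di1} gives directly that $D_n < C = C^{2^{-0}}$ for all $n \geq 1$, so any $n_0$ works (say $n_0 = 0$). For the inductive step, suppose that for a given $s$ there is a positive integer $n_0 = n_0(s)$ such that $n > n_0$ implies $1 \leq D_n < C^{2^{-s}}$. We apply Claim~\ref{claim3} with $L := C^{2^{-s}}$ (note $L > 1$ since $C > 1$) and $m_0 := n_0$: this yields a positive integer $m_2 \geq n_0$ such that $n > m_2$ implies $1 \leq D_n < \sqrt{L} = \sqrt{C^{2^{-s}}} = C^{2^{-(s+1)}}$. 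Setting $n_0(s+1) := m_2$ completes the induction.

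The only subtlety worth checking is that the hypotheses of Claim~\ref{claim3} are genuinely met at each stage: we need $L > 1$, which holds because $C > 1$ and $2^{-s} > 0$, and we need an index $m_0$ beyond which $1 \leq D_n < L$, which is exactly the inductive hypothesis. The lower bound $D_n \geq 1$ is available throughout since $(C_n)_{n=1}^\infty$ is non-decreasing, as noted just before~\eqref{di1}. I do not expect any real obstacle here — the entire content has been front-loaded into Claims~\ref{claim1}--\ref{claim3}, and in particular the arithmetic fact $(5/8)^2 < 1/2$ that powers the ``square-root improvement'' in Claim~\ref{claim3}. Claim~\ref{claim4} is then just the formalization of iterating that improvement $s$ times, each iteration halving the exponent of $C$.

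It is worth remarking on how Claim~\ref{claim4} finishes the proof of the theorem (the optimality of the asymptotic growth), since that is presumably the next step after this claim. Given any $\varepsilon > 0$, choose $s$ large enough that $C^{2^{-s}} < 1 + \varepsilon$; Claim~\ref{claim4} then produces $n_0$ with $1 \leq D_n < 1+\varepsilon$ for all $n > n_0$. Hence $D_n = C_{n+1}/C_n \to 1$, which is precisely the assertion that $\lim_{m\to\infty} C_m/C_{m-1} = 1$ for the explicit families of Theorems~\ref{p1} and~\ref{p2}; and since the optimal constants $K_m$ satisfy $K_m \leq C_m$ while being non-decreasing and at least $1$, a standard squeeze gives $\lim_{m\to\infty} K_m/K_{m-1} = 1$ as well. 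But the statement to be proved here is just Claim~\ref{claim4} itself, and the induction above is the whole argument.
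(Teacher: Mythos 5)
Your proof is correct and matches the paper's argument essentially verbatim: induction on $s$, base case from~\eqref{di1}, inductive step by invoking Claim~\ref{claim3} with $L = C^{2^{-s}}$ to halve the exponent. The paper's proof is no more and no less than this.
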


\begin{proof}
We argue by induction on $s$. The case $s=0$ is equation \eqref{di1}.
Suppose that the result holds for some $s\geq0$. Then there exists $n_{0}$
such that $n>n_{0}$ implies $1\leq D_{n}<C^{2^{-s}}$ and, by Claim \ref%
{claim3}, there exists $m_{1}>n_{0}$ such that
\begin{equation*}
1\leq D_{n}<\left( C^{2^{-s}}\right)^{1/2}=C^{2^{-(s+1)}},
\end{equation*}
whenever $n>m_{1}$.
\end{proof}

After the four previous claims, we can now easily conclude that $\left(
D_{n}\right) _{n=1}^{\infty }$ is convergent and $\displaystyle%
\lim_{n\rightarrow \infty }D_{n}=1$. In fact, let $\varepsilon >0;$ let $%
s_{0}$ be a positive integer such that $C^{\left( 2^{-s_{0}}\right)
}<1+\varepsilon$ and the result follows from Claim \ref{claim4}.

\begin{bibdiv}
\begin{biblist}

\bib{Bl}{book}{
   author={Blei, Ron},
   title={Analysis in integer and fractional dimensions},
   series={Cambridge Studies in Advanced Mathematics},
   volume={71},
   publisher={Cambridge University Press},
   place={Cambridge},
   date={2001},
   pages={xx+556},
}

\bib{bh}{article}{
   author={Bohnenblust, H. F.},
   author={Hille, Einar},
   title={On the absolute convergence of Dirichlet series},
   journal={Ann. of Math. (2)},
   volume={32},
   date={1931},
   number={3},
   pages={600--622},
}

\bib{Bo}{article}{
  author={Bombal, Fernando},
   author={P{\'e}rez-Garc{\'{\i}}a, David},
   author={Villanueva, Ignacio},
   title={Multilinear extensions of Grothendieck's theorem},
   journal={Q. J. Math.},
   volume={55},
   date={2004},
   number={4},
   pages={441--450},

}

\bib{d}{article}{
   author={Davie, A. M.},
   title={Quotient algebras of uniform algebras},
   journal={J. London Math. Soc. (2)},
   volume={7},
   date={1973},
   pages={31--40},
}

\bib{ann}{article}{
   author={Defant, A.},
   author={Frerick, L.},
   author={Ortega-Cerd\`{a}, J.},
   author={Ouna\"{\i}es, M.},
   author={Seip, K.},
   title={The Bohnenblust--Hille inequality for homogeneous polynomials is hypercontractive},
   journal={Ann. of Math. (2)},
   volume={174},
   date={2011},
   pages={485--497},
}

\bib{defant}{article}{
   author={Defant, Andreas},
   author={Popa, Dumitru},
   author={Schwarting, Ursula},
   title={Coordinatewise multiple summing operators in Banach spaces},
   journal={J. Funct. Anal.},
   volume={259},
   date={2010},
   number={1},
   pages={220--242},
}

\bib{d2}{article}{
   author={Defant, Andreas},
   author={Sevilla-Peris, Pablo},
   title={A new multilinear insight on Littlewood's 4/3-inequality},
   journal={J. Funct. Anal.},
   volume={256},
   date={2009},
   number={5},
   pages={1642--1664},
}

\bib{Di}{book}{
   author={Diestel, Joe},
   author={Jarchow, Hans},
   author={Tonge, Andrew},
   title={Absolutely summing operators},
   series={Cambridge Studies in Advanced Mathematics},
   volume={43},
   publisher={Cambridge University Press},
   place={Cambridge},
   date={1995},
}

\bib{haag}{article}{
   author={Haagerup, Uffe},
   title={The best constants in the Khintchine inequality},
   journal={Studia Math.},
   volume={70},
   date={1981},
   number={3},
   pages={231--283 (1982)},
}

\bib{Ka}{article}{
   author={Kaijser, Sten},
   title={Some results in the metric theory of tensor products},
   journal={Studia Math.},
   volume={63},
   date={1978},
   number={2},
   pages={157--170},
}

\bib{Ko}{book}{
   author={K{\"o}nig, Hermann},
   title={Eigenvalue distribution of compact operators},
   series={Operator Theory: Advances and Applications},
   volume={16},
   publisher={Birkh\"auser Verlag},
   place={Basel},
   date={1986},
   pages={262},
}

\bib{Litt}{article}{
   author={Littlewood, J.E.},
   title={On bounded bilinear forms in an infinite number of variables},
   journal={Q. J. Math.},
   volume={1},
   date={1930},
   pages={164--174},
}

\bib{mps}{article}{
author={Mu\~{n}oz-Fern\'{a}ndez, G. A.},
author={Pellegrino, D.},
author={Seoane-Sep\'{u}lveda, J. B.},
title={Estimates for the asymptotic behavior of the constants in the
Bohnenblust--Hille inequality},
journal={Linear Multilinear A.},
status={In Press}
}

\bib{ps}{article}{
author={Pellegrino, D.},
author={Seoane-Sep\'{u}lveda, J. B.},
title={New upper bounds for the constants in the Bohnenblust-Hille inequality},
journal={J. Math. Anal. Appl.},
status={In Press}
}

\bib{Pi}{book}{
  author={Pietsch, Albrecht},
   title={Eigenvalues and $s$-numbers},
   series={Cambridge Studies in Advanced Mathematics},
   volume={13},
   publisher={Cambridge University Press},
   place={Cambridge},
   date={1987},
   pages={360},
}

\bib{Q}{article}{
author={Queff\'{e}lec, H.},
title={H. Bohr's vision of ordinary Dirichlet series: old and new results},
journal={J. Anal.},
volume={3},
date={1995},
pages={43--60},
}

\end{biblist}
\end{bibdiv}
\end{document}